\newtheorem{theorem}{Theorem}[section]
\newtheorem{lemma}[theorem]{Lemma}
\newtheorem{corollary}[theorem]{Corollary}
\newtheorem{conjecture}[theorem]{Conjecture}
\newtheorem{example}[theorem]{Example}
\newtheorem{remark}[theorem]{Remark}
\begin{document}

\author{Galyna Dobrovolska\\
\small Department of Mathematics\\[-0.8ex]
\small Ariel University\\[-0.8ex] 
\small Ariel, Israel\\
\small\tt galdobr@gmail.com}

\title{Combinatorial wall-crossing for the sign representation of prime size}

\maketitle

\begin{abstract}

We present an algorithm to calculate the result of combinatorial wall-crossing at every step starting with the column partition of prime size. This algorithm is confirmed by computer calculations. The output of the algorithm is consistent with Bezrukavnikov's conjecture in the case of the sign representation.


\end{abstract}

\section{Introduction}

The object of study of this paper is an algorithm for combinatorial wall-crossing for the rational Cherednik algebra. For two positive integers $r \leq s$ let $0<\frac{r^{\prime}_1}{s^{\prime}_1}<...<\frac{r^{\prime}_q}{s^{\prime}_q}<r/s$ be all the rational numbers between $0$ and $r/s$ whose denominator is at most $n$, to be called "walls," i.e. the terms of the $n$-th Farey sequence which are smaller than $r/s$. For a positive integer $e$ let $M_e$ be the generalized Mullineux involution which is a certain extension of the usual $e$-Mullineux involution from the set of $e$-regular partitions to all partitions, see below. For a partition $\alpha$ let $\alpha^t$ stand for the transpose partition. In this paper we study the permutation $\widetilde{M}_{r/s}= M_{s^{\prime}_k}^t \circ \dots \circ M_{s^{\prime}_1}^t$ on partitions of $n$. We think of the involution $M_{s^{\prime}_i}^t$ as "wall-crossing" across the wall $\frac{r^{\prime}_i}{s^{\prime}_i}$ and call $\widetilde{M}_{r/s}$ the {\it combinatorial wall-crossing transformation} to the left of the wall $r/s$.

 Beilinson and Ginzburg (\cite{BG}) were the first to study wall-crossing functors for representations of complex semisimple Lie algebras. Wall-crossing functors for quantized symplectic resolutions appeared as a crucial tool in the work of Bezrukavnikov and Losev (\cite{BL}). Wall-crossing functors are perverse equivalences, inducing bijections between irreducible objects of the corresponding category $\mathcal O$, called combinatorial wall-crossing. It was proved by Losev (\cite{L}) that combinatorial wall-crossing for the rational Cherednik algebra of type A is the same as the Mullineux involution in large positive characteristic.  Wall-crossing was also recently the subject of study of the works by Gorsky and Negut (\cite{GN}) and Su, Zhao, and Zhong (\cite{SZZ}). 
 
 Considering the above permutation $\widetilde{M}_{r/s}$ is motivated by Bezrukavnikov's combinatorial conjecture (see below). This conjecture relates the wall-crossing functors for the rational Cherednik algebra and the monodromy of the quantum connection for the Hilbert scheme of points in the plane (cf. \cite{BO}). Namely, Bezrukavnikov conjectured that certain invariants, which correspond to dimensions of supports of simple representations of the rational Cherednik algebra, of the composition of the above involutions are the same as similar invariants for a composition of much simpler involutions, which do not involve the Mullineux involution in their definition.

In order to state Bezrukavnikov's conjecture, we have to define another operation $\widetilde{M}^{\prime}_{r/s}$ on partitions of $n$ for a given term $r/s$ of the $n$-th Farey sequence. Firstly, consider the operation of concatenation $\cup$ of two partitions, defined as follows: the sequence of rows of the new partition is obtained as the multiset union of the two sequences of rows of the old partitions. Next we represent our partition uniquely as a concatenation $\mu = \nu \cup e \rho$ where each row of $\nu$ is not divisible by $e$ and each row of $e \rho$ is equal to $e$ times the corresponding row of the partition $\rho$. For example, for $e=2$ and $\mu=(4,3,2,1)$ we have $\nu=(3,1)$ and $\rho=(2,1)$. Finally we define $M^{\prime}_e(\mu) = \nu \cup e \rho^t$ and $\widetilde{M}^{\prime}_{r/s} = M_{s^{\prime}_k}^{\prime} \circ \dots \circ M_{s^{\prime}_1}^{\prime}$, where  $0<\frac{r^{\prime}_1}{s^{\prime}_1}<...<\frac{r^{\prime}_k}{s^{\prime}_k}<r/s$ are all the rational numbers between $0$ and $r/s$, whose denominator is at most $n$. 

A partition is called $e$-regular if each of its parts is repeated no more than $e-1$ times. To state Bezrukavnikov's conjecture, we make use of an extension of the Mullineux involution from the set of $e$-regular partitions to the set of all partitions. We will abuse the notation slightly and denote the extension by the same letter $M_e$ and call it the {\it generalized Mullineux involution}. For a partition $\rho$ denote by $e * \rho$ the partition which is obtained by repeating each part of $\rho$ exactly $e$ times. We can write uniquely $\mu = \nu \cup e * \rho$ with $\nu$ regular. Having done this, we define $M_e(\mu) = M_e(\nu) \cup e * \rho^t$. Now we will state Bezrukavnikov's conjecture:

\begin{conjecture} {\rm (R. Bezrukavnikov, cf. Conjecture A.2 in \cite{DY})} For every positive integer $n$, every partition $\lambda$ of $n$, and every term of the Farey sequence $r/s$, the total number of boxes in all rows divisible by $s$ in the two partitions $\widetilde{M}_{r/s}(\lambda)$  and $\widetilde{M}^{\prime}_{r/s}(\lambda^t)$ is the same. 
\end{conjecture}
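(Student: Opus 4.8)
The plan is to prove the equality of the two invariants by induction on the number of walls below $r/s$, after re-encoding the statistic $B_s(\pi):=\sum_{i\,:\,s\mid \pi_i}\pi_i$ (the number of boxes in the rows of $\pi$ whose length is divisible by $s$) in a form that is transparent under the elementary operations performed at each wall-crossing. Concretely, I would record a partition by its $\beta$-numbers on an $s$-runner abacus, in which $B_s$ becomes an explicit linear functional of the bead configuration, and in which both the transpose $\lambda\mapsto\lambda^t$ and the map $M'_e$ act by bookkeepable moves of beads between runners. Since $\widetilde{M}_{r/s}$ and $\widetilde{M}'_{r/s}$ are compositions indexed by the same Farey data $\tfrac{r'_1}{s'_1}<\dots<\tfrac{r'_k}{s'_k}$, the induction runs on $k$, the base case being a direct check.

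The first half of the inductive step is elementary. On the primed side, $M'_e(\mu)=\nu\cup e\rho^t$ with $\nu$ the rows of $\mu$ not divisible by $e$, so $M'_e$ fixes $\nu$ pointwise and only transposes the $e$-divisible rows $\rho$; this makes each $M'_{s'_i}$ completely explicit on the abacus. The unprimed side factors as $M_e(\mu)=M_e(\nu)\cup e{*}\rho^t$ with $\nu$ the $e$-regular part. The decisive observation reconciling the two sides is the identity $(e{*}\sigma)^t=e\cdot\sigma^t$, valid for every partition $\sigma$: the very transpose that relates the two inputs of the conjecture ($\lambda$ versus $\lambda^t$) converts the ``repeat $e$-fold'' structure $e{*}\rho$ governing the singular part of $M_e$ into exactly the ``multiply by $e$'' structure $e\cdot\rho$ governing the divisible part of $M'_e$. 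I would exploit this to build a bijection between the singular piece of $\widetilde{M}_{r/s}(\lambda)$ and the divisible piece of $\widetilde{M}'_{r/s}(\lambda^t)$ that preserves the contribution to $B_s$, thereby reducing the entire comparison to the $e$-regular part $\nu$ and the genuine Mullineux involution $M_e(\nu)$.

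With both per-step effects pinned down on the abacus, I would assemble the global comparison using the Stern--Brocot structure of the walls: the terms $\tfrac{r'_i}{s'_i}$ below $r/s$ trace a path in the Stern--Brocot tree, and crossing consecutive walls corresponds to a constrained sequence of elementary runner moves. Telescoping these moves, together with the singular/divisible matching established above, should force the accumulated values of $B_s$ on the two sides to agree at $r/s$, closing the induction. Throughout I would lean on Losev's identification of combinatorial wall-crossing with the Mullineux involution in large positive characteristic (\cite{L}), which guarantees that the quantities being telescoped are the genuine wall-crossing invariants.

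The main obstacle is precisely the action of the genuine Mullineux involution on the $e$-regular part $\nu$ for arbitrary $\lambda$: unlike $M'_e$, the map $M_e$ has no closed form, being defined recursively through the rim-hook/good-node algorithm, so quantifying its effect on $B_s$ for $s\neq e$ is the crux. For special $\lambda$ the regular part and its Mullineux image are small enough to be handled explicitly, which is what makes an unconditional verification feasible in the cases treated here; for general $\lambda$ I expect that controlling this step rigorously requires the representation-theoretic input behind the conjecture, namely the identification of wall-crossing functors with the monodromy of the quantum connection for the Hilbert scheme of points (cf.\ \cite{BO}), through which both $B_s$-invariants acquire the common meaning of a support dimension. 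Turning that identification into effective combinatorial control of $M_e$ on all partitions is the step I expect to be genuinely hard, and it is the reason the conjecture in its stated full generality lies beyond the explicit-algorithm method developed in this paper.
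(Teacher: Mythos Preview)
The statement you are attempting to prove is presented in the paper as an open \emph{conjecture} (Bezrukavnikov's conjecture), not as a theorem; the paper contains no proof of it in general. The paper's contribution is an explicit algorithm for the special case $\lambda=(1^p)$ with $p$ prime, together with the observation that its output is consistent with the conjecture in that case; the trivial-representation case is attributed to the cited references \cite{DY}, \cite{D}. So there is no proof in the paper against which to compare your proposal.

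Your proposal is likewise not a proof but a strategic outline, and you say so explicitly in the final paragraph: the action of the genuine Mullineux involution $M_e$ on the $e$-regular part is identified as the crux, and no argument is offered for it beyond the hope that the representation-theoretic identification of wall-crossing with quantum-connection monodromy (cf.\ \cite{BO}, in preparation) will eventually supply control. That is exactly the content of the conjecture, so the outline is circular at its decisive step. There is also a structural issue with the proposed induction on the number $k$ of walls below $r/s$: the statistic $B_s$ is tied to the \emph{final} denominator $s$, whereas the intermediate wall-crossings involve denominators $s'_1,\dots,s'_k$ different from $s$, so it is not clear what quantity is meant to be preserved step by step, or what the inductive hypothesis at stage $i<k$ even asserts. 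The identity $(e{*}\sigma)^t=e\cdot\sigma^t$ is correct and does align the singular pieces on the two sides, but it only reduces the conjecture to its hard core rather than resolving it.
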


\begin{remark} Examples verifying this conjecture for $n \leq 5$ are located in the appendix to \cite{DY}.
\end{remark}

\begin{remark} Wall-crossing for the trivial representation was studied in \cite{DY} and \cite{D} and Bezrukavnikov's conjecture was proven for the trivial representation therein. In particular, the paper \cite{D} gives an explicit algorithm for finding the result of combinatorial wall-crossing for the trivial representation at every step, i.e. to the left of any fraction in the Farey sequence.
\end{remark}

\section{Sign representation}

Given positive integers $f_1>\dots>f_m$ and $g_1,\dots,g_m$, we will denote by $(f_1^{g_1}, f_2^{g_2}, \dots, f_m^{g_m})$ the partition of the integer $f_1 g_1 + \dots + f_m g_m$ with $g_i$ parts equal to $f_i$. Let $p$ be a prime number. Below we describe the algorithm to construct each step of combinatorial wall-crossing with respect to the $p$-th Farey sequence starting with the column partition $(1^p)$. 

Let us fix a wall $\frac{m}{p}$. Then the partition to the left of $\frac{m}{p}$ is $(1^p)$ and the partition to the right of $\frac{m}{p}$ is $(p)$. Let $\frac{f}{g}$ be the closest fraction to the right of $\frac{m}{p}$ in the $p$-th Farey sequence. Let $p=(g-1)x + y$ be the division with remainder. Then the Young diagram immediately to the right of $\frac{f}{g}$ is $((g-1)^x,y)$.

The next steps of the algorithm are as follows. We start with the partition $(a^c, b^d)=((g-1)^x, y)$ in the first step of the algorithm above. At each step of the algorithm we take the current two-step partition $(a^c, b^d)$ and send it to the next two-step partition $((a-b)^{c(k+1)+dk}, l^{c+d})$ where $b=k(a-b)+l$ is the division with remainder.

We recover the denominators of the walls where the partition changes to the left of $(a^c, b^d)$ as $a+d$ and to the right of $(a^c, b^d)$ as $c+d+a-b$. The denominator uniquely determines the Farey fraction in the interval $(\frac{m}{p}, \frac{m+1}{p})$, hence from the algorithm we recover all the Farey fractions where the partition changes. At all the intermediate Farey fractions between the ones we recovered above from the algorithm the Young diagram does not change.

We also obtain four equations with four unknowns for $a,b,c,d$ in the partition $(a^c, b^d)$ in terms of the terms of the Farey sequence where the Young diagram changes. Let $b=k(a-b)+l$ be the division with remainder. Let $\frac{f^{\prime}}{g^{\prime}}$, $\frac{f^{\prime \prime}}{g^{\prime \prime}}$, $\frac{f^{\prime \prime \prime}}{g^{\prime \prime \prime}}$ be the walls where the partition changes to the left of $(a^c, b^d)$, to the right of $(a^c, b^d)$, and one more step to the right. Then we have the following system of equations:

\begin{equation}
\label{system}
\begin{cases}
ac+bd=p \\
a+d = g^{\prime} \\
c+d+a-b = g^{\prime \prime} \\
c(k+2)+d(k+1)+a+b-l = g^{\prime \prime \prime}
\end{cases}
\end{equation}

 \

\begin{example}

The following diagram illustrates the above algorithm and the system of equations (\ref{system}) for $p=29$:

\begin{center}

\tikzset{every picture/.style={line width=0.75pt}} 

\begin{tikzpicture}[x=0.4pt,y=0.4pt,yscale=-1,xscale=1]

\draw  [draw opacity=0] (-247.8,-325.4) -- (731.91,-325.4) -- (731.91,75.1) -- (-247.8,75.1) -- cycle ; \draw  [color={rgb, 255:red, 155; green, 155; blue, 155 }  ,draw opacity=0.6 ] (-247.8,-325.4) -- (-247.8,75.1)(-227.8,-325.4) -- (-227.8,75.1)(-207.8,-325.4) -- (-207.8,75.1)(-187.8,-325.4) -- (-187.8,75.1)(-167.8,-325.4) -- (-167.8,75.1)(-147.8,-325.4) -- (-147.8,75.1)(-127.8,-325.4) -- (-127.8,75.1)(-107.8,-325.4) -- (-107.8,75.1)(-87.8,-325.4) -- (-87.8,75.1)(-67.8,-325.4) -- (-67.8,75.1)(-47.8,-325.4) -- (-47.8,75.1)(-27.8,-325.4) -- (-27.8,75.1)(-7.8,-325.4) -- (-7.8,75.1)(12.2,-325.4) -- (12.2,75.1)(32.2,-325.4) -- (32.2,75.1)(52.2,-325.4) -- (52.2,75.1)(72.2,-325.4) -- (72.2,75.1)(92.2,-325.4) -- (92.2,75.1)(112.2,-325.4) -- (112.2,75.1)(132.2,-325.4) -- (132.2,75.1)(152.2,-325.4) -- (152.2,75.1)(172.2,-325.4) -- (172.2,75.1)(192.2,-325.4) -- (192.2,75.1)(212.2,-325.4) -- (212.2,75.1)(232.2,-325.4) -- (232.2,75.1)(252.2,-325.4) -- (252.2,75.1)(272.2,-325.4) -- (272.2,75.1)(292.2,-325.4) -- (292.2,75.1)(312.2,-325.4) -- (312.2,75.1)(332.2,-325.4) -- (332.2,75.1)(352.2,-325.4) -- (352.2,75.1)(372.2,-325.4) -- (372.2,75.1)(392.2,-325.4) -- (392.2,75.1)(412.2,-325.4) -- (412.2,75.1)(432.2,-325.4) -- (432.2,75.1)(452.2,-325.4) -- (452.2,75.1)(472.2,-325.4) -- (472.2,75.1)(492.2,-325.4) -- (492.2,75.1)(512.2,-325.4) -- (512.2,75.1)(532.2,-325.4) -- (532.2,75.1)(552.2,-325.4) -- (552.2,75.1)(572.2,-325.4) -- (572.2,75.1)(592.2,-325.4) -- (592.2,75.1)(612.2,-325.4) -- (612.2,75.1)(632.2,-325.4) -- (632.2,75.1)(652.2,-325.4) -- (652.2,75.1)(672.2,-325.4) -- (672.2,75.1)(692.2,-325.4) -- (692.2,75.1)(712.2,-325.4) -- (712.2,75.1) ; \draw  [color={rgb, 255:red, 155; green, 155; blue, 155 }  ,draw opacity=0.6 ] (-247.8,-325.4) -- (731.91,-325.4)(-247.8,-305.4) -- (731.91,-305.4)(-247.8,-285.4) -- (731.91,-285.4)(-247.8,-265.4) -- (731.91,-265.4)(-247.8,-245.4) -- (731.91,-245.4)(-247.8,-225.4) -- (731.91,-225.4)(-247.8,-205.4) -- (731.91,-205.4)(-247.8,-185.4) -- (731.91,-185.4)(-247.8,-165.4) -- (731.91,-165.4)(-247.8,-145.4) -- (731.91,-145.4)(-247.8,-125.4) -- (731.91,-125.4)(-247.8,-105.4) -- (731.91,-105.4)(-247.8,-85.4) -- (731.91,-85.4)(-247.8,-65.4) -- (731.91,-65.4)(-247.8,-45.4) -- (731.91,-45.4)(-247.8,-25.4) -- (731.91,-25.4)(-247.8,-5.4) -- (731.91,-5.4)(-247.8,14.6) -- (731.91,14.6)(-247.8,34.6) -- (731.91,34.6)(-247.8,54.6) -- (731.91,54.6)(-247.8,74.6) -- (731.91,74.6) ; \draw  [color={rgb, 255:red, 155; green, 155; blue, 155 }  ,draw opacity=0.6 ]  ;
\draw   (92.2,-45.4) -- (232.2,-45.4) -- (232.2,-25.4) -- (-127.8,-25.4) -- (-127.8,-65.4) -- (92.2,-65.4) -- cycle ;
\draw   (492.2,-85.4) -- (492.2,-25.4) -- (352.2,-25.4) -- (352.2,-125.4) -- (432.2,-125.4) -- (432.2,-85.4) -- cycle ;
\draw   (632.2,-285.4) -- (632.2,-185.4) -- (672.2,-185.4) -- (672.2,-25.4) -- (612.2,-25.4) -- (612.2,-285.4) -- cycle ;
\draw    (-247.8,34.6) -- (730.8,34.1) ;
\draw [shift={(733.8,34.1)}, rotate = 179.97] [fill={rgb, 255:red, 0; green, 0; blue, 0 }  ][line width=0.08]  [draw opacity=0] (10.72,-5.15) -- (0,0) -- (10.72,5.15) -- (7.12,0) -- cycle    ;
\draw    (-187.2,24.1) -- (-188.2,41.1) ;
\draw    (292,25) -- (292.2,45.1) ;
\draw    (552,23) -- (553.2,44.1) ;

\draw (-197,-20.6) node [anchor=north west][inner sep=0.75pt]    {$\frac{2}{19} \ \ \ \ \ \ \ \ \ \ \ \ \ \ \ $};
\draw (285,-21.6) node [anchor=north west][inner sep=0.75pt]    {$\frac{1}{9}$};
\draw (545,-21.6) node [anchor=north west][inner sep=0.75pt]    {$\frac{1}{8}$};
\draw (-27,-94.6) node [anchor=north west][inner sep=0.75pt]    {$11$};
\draw (101,-74.6) node [anchor=north west][inner sep=0.75pt]    {$1$};
\draw (241,-47.6) node [anchor=north west][inner sep=0.75pt]    {$1$};
\draw (44,-22.6) node [anchor=north west][inner sep=0.75pt]    {$18$};
\draw (414.2,-22) node [anchor=north west][inner sep=0.75pt]    {$7$};
\draw (499,-64.6) node [anchor=north west][inner sep=0.75pt]    {$3$};
\draw (437,-114.6) node [anchor=north west][inner sep=0.75pt]    {$2$};
\draw (387,-152.6) node [anchor=north west][inner sep=0.75pt]    {$4$};
\draw (634.2,-22) node [anchor=north west][inner sep=0.75pt]    {$3$};
\draw (678,-113.6) node [anchor=north west][inner sep=0.75pt]    {$8$};
\draw (639,-243.6) node [anchor=north west][inner sep=0.75pt]    {$5$};
\draw (613,-313.6) node [anchor=north west][inner sep=0.75pt]    {$1$};

\end{tikzpicture}

\end{center}

Note that in the above diagram we are initially at the stage of wall-crossing for the sign representation corresponding to the Young diagram $(a^c, b^d)=(18^1, 11^1)$. The next diagram under wall-crossing is $(7^3,4^2)$, and the next after that is $(3^8, 1^5)$. Since we have $a=18, c=1, b=11, d=1$, we can check the equations in the system (\ref{system}). We have $ac+bd=29$. Note also the Farey fractions in the above diagram where the partition changes unde wall-crossing: $\frac{f^{\prime}}{g^{\prime}}=\frac{2}{19}$, $\frac{f^{\prime \prime}}{g^{\prime \prime}}=\frac{1}{9}$, $\frac{f^{\prime \prime \prime}}{g^{\prime \prime \prime}}=\frac{1}{8}$. Now we check the rest of the equations in the system (\ref{system}). We have $a+d=19=g^{\prime}$, which is the $g^{\prime}$-rim of the transposed partition $(18^1,11^1)^t$. We also have $c+d+a-b=1+1+18-11=9=g^{\prime \prime}$, which is the $g^{\prime \prime}$-rim of the partition $(18,11)$ and the $g^{\prime \prime}$-rim of the transpose of the next partition $(7^3, 4^2)^t$, which can be computed as $7+2$. Finally, we can calculate $k$ and $l$ as division with remainder $11=k(18-11)+l$, so $k=1$ and $l=4$. We check the prediction of the algorithm that $(a-b)^{c(k+1)+dk}, l^{c+d})=(7^3, 4^2)$ because $c(k+1)+dk=2+1=3$. Now the length of the $g^{\prime \prime \prime}$-rim of the transposed partition $(7^3,4^2)^t$ is $g^{\prime \prime \prime}=8=2+3+7-4=(c(k+1)+dk)+(c+d)+(a-b)-l=c(k+2)+d(k+1)+a-b-l$.

\end{example}










\begin{conjecture} The above algorithm gives the result of combinatorial wall-crossing at each step starting with the sign representation of prime size.
\end{conjecture}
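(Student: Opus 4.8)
\section*{Proof proposal}

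The plan is to reduce the conjecture to two local statements about a single wall-crossing $M_e^t=(M_e(\,\cdot\,))^t$ applied to a two-step partition, and then to control the Farey combinatorics that glues the local steps together. First I would observe that every partition produced by the algorithm, namely $(a^c,b^d)$ with $a>b\ge 1$ and $c,d\ge 1$, is $e$-regular for the relevant modulus $e=c+d+a-b$, since $a>b$ forces $e>c+d\ge\max(c,d)$; the same holds for the large denominators occurring at intermediate walls. Hence, apart from the very first crossing of $\frac{m}{p}$ (where $(1^p)$ is genuinely $p$-singular and the generalized rule gives $M_p((1^p))=(1^p)$, i.e.\ $M_p^t((1^p))=(p)$), the generalized Mullineux involution coincides with the ordinary Mullineux involution on $e$-regular partitions throughout, so I may work entirely with the classical map.

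The core is the single-step identity: for $e=c+d+a-b$ and the division $b=kw+l$ with $w=a-b$ and $0\le l<w$, one has $M_e^t\big((a^c,b^d)\big)=\big((a-b)^{c(k+1)+dk},\,l^{c+d}\big)$. Writing $Q=c+d$ and $P=c(k+1)+dk=Qk+c$, this is equivalent to $M_e\big((a^c,b^d)\big)=\big((P+Q)^{l},P^{\,w-l}\big)$. I would prove this by computing the Mullineux symbol of $(a^c,b^d)$ through iterated removal of $e$-rims. A direct traversal of the rim shows the first $e$-rim has exactly $e$ boxes (down the right edge, left along row $c$, down column $b$) and that its removal sends $(a^c,b^d)$ to $((a-1)^{c-1},(b-1)^{d+1})$, a two-step partition with the same associated modulus $e$. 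Iterating strips the long part down to a rectangle; one further rectangle step regenerates a two-step partition whose short part has multiplicity one, and the process repeats. The bookkeeping is exactly a subtractive Euclidean algorithm, which is where $k$ and $l$ enter: the number of full rounds is governed by $\lfloor b/w\rfloor$. Having recorded the symbol with columns $(a_i,r_i)$, I would apply the symmetry rule $s_i=a_i-r_i+\varepsilon_i$ (with $\varepsilon_i=1$ iff $e\nmid a_i$) and reconstruct, checking that the resulting symbol is precisely that of $\big((P+Q)^l,P^{\,w-l}\big)$; the base case $M_g^t((p))=((g-1)^x,y)$ is the degenerate instance ($d=0$) obtained the same way.

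For the gluing I would prove a persistence lemma: if $e'$ exceeds the rim length $a+c+d-1$ of $(a^c,b^d)$, then every $e'$-rim in the stripping is the entire rim, so the Mullineux algorithm reduces to conjugation and $M_{e'}^t$ fixes $(a^c,b^d)$. On the Farey side I would show that the two denominators governing a given partition are $g'=a+d$ (the wall at which it is created, from the left) and $g''=c+d+a-b$ (the wall at which it changes, to the right), that consecutive change-walls are unimodular Farey neighbors --- this is where the relations in the system (\ref{system}) are used, the four equations pinning down $a,b,c,d$ from $g',g'',g'''$ --- and hence that every intermediate wall has denominator at least $g'+g''=2a+c+2d-b>a+c+d-1$, so by the persistence lemma the diagram is unchanged there. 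Composing the local identity over all walls below $r/s$ then shows that the successive $M_{s'_i}^t$ reproduce exactly the sequence generated by the algorithm, which is the assertion of combinatorial wall-crossing starting from $(1^p)$.

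The main obstacle is the general $e$-rim computation of the Mullineux symbol in the core step: while each individual example ($(18^1,11^1)\to(7^3,4^2)\to(3^8,1^5)$) is transparent, the general pattern branches according to the comparisons among $b$, $c$, and $w=a-b$ and on whether $l=0$, and several boundary phenomena appear (a part dropping to zero during the first phase, a rectangle whose rim is shorter than $e$, and the distinction $e\mid a_i$ versus $e\nmid a_i$ in the final columns). Organizing this case analysis so that the Euclidean data $(k,l)$ emerges uniformly --- ideally by exhibiting a single closed form for the Mullineux symbol of $(a^c,b^d)$ and showing the symmetry rule carries it to the symbol of the claimed image --- is the step I expect to require the most care.
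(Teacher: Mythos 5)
You should know at the outset that the paper does \emph{not} prove this statement: it is stated as a conjecture, supported only by computer verification (``the code that checks the above algorithm is available upon request''), so there is no proof of record to compare yours against, and your argument has to stand entirely on its own. As an outline, your plan is sound, and its checkable ingredients are correct: the $e$-regularity observation holds since $e=c+d+a-b>c+d$; for $e=c+d+a-b$ the first $e$-rim of $(a^c,b^d)$ does consist of exactly $e$ boxes ending at the first box of the bottom row, and its removal gives $((a-1)^{c-1},(b-1)^{d+1})$, which has the same associated modulus; the persistence lemma (every $e'$-rim is the full rim when $e'$ exceeds the rim length $a+c+d-1$, forcing $\varepsilon_i=1$ throughout and hence $M_{e'}=(\cdot)^t$) is correct; and the symbol computation does reproduce the paper's example $(18^1,11^1)\mapsto(7^3,4^2)$. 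But what you have written is a plan, not a proof: the single-step identity $M_e^t\bigl((a^c,b^d)\bigr)=\bigl((a-b)^{c(k+1)+dk},l^{c+d}\bigr)$ \emph{is} the conjecture, and the Euclidean bookkeeping that produces $k$ and $l$ --- the phase transitions at rectangles, the cases $l=0$, parts dropping to zero, and the $e\mid a_i$ versus $e\nmid a_i$ dichotomy in the symbol --- is exactly the content you defer to future care. Until that closed form for the Mullineux symbol is established and pushed through the symmetry rule, nothing has been proved.

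Two further gaps in the gluing deserve explicit attention. First, your Farey argument needs consecutive change-walls to be \emph{unimodular} neighbors, but the system (\ref{system}) constrains only denominators; unimodularity involves the numerators $f', f'', f'''$ as well, so you need an additional inductive argument tracking numerators (e.g., that each change-wall is the unique fraction of its denominator in the interval $(\frac{m}{p},\frac{m+1}{p})$ and that successive ones satisfy $f''g'-f'g''=1$). Second, and more seriously, the persistence lemma cannot cover the last stretch of each cycle. When the iteration terminates (the case $l=0$ with $a-b=1$) the partition becomes $(1^p)$, whose rim length is $p$ itself, and $(1^p)$ is \emph{not} fixed by intermediate crossings: for $s<p$ one computes from the generalized rule that $M_s^t\bigl((1^p)\bigr)\neq(1^p)$ (e.g.\ $M_{21}^t\bigl((1^{29})\bigr)=(22,1^7)$). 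So the correctness of the algorithm forces there to be \emph{no} walls at all between the wall where $(1^p)$ is created and the next denominator-$p$ wall, i.e.\ those two fractions must be adjacent in the $p$-th Farey sequence (as indeed happens: $1/15$ and $2/29$, or $3/22$ and $4/29$, for $p=29$). Your proposal folds this into the generic persistence argument, where it does not belong; it requires a separate unimodularity statement for the terminal step of each cycle, without which the claimed equality ``partition to the left of every wall $\frac{m}{p}$ is $(1^p)$'' --- the induction hypothesis that restarts each cycle --- is unjustified.
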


The code that checks the above algorithm is available upon request.

Note that the above algorithm is consistent with Bezrukavnikov's conjecture for the sign representation of prime size. Namely for every Farey fraction $\frac{r}{s}$ the operation $\widetilde{M}_{r/s}$ always gives $\widetilde{M}_{r/s}(1^p) = (1^p)$ and the total number of boxes divisible by $s$ in $(1^p)$ is $p$ if $s=p$ and $0$ otherwise.

\subsection*{Acknowledgements}
The author is grateful to Roman Bezrukavnikov for suggesting this problem and for many useful discussions.

\end{document}